\newtheorem{thm}{Theorem}[section]
\newtheorem{lem}[thm]{Lemma}
\newtheorem{cor}[thm]{Corollary}
\newtheorem{rmk}{Remark}
\numberwithin{equation}{section}
\newcommand{\ep}{\varepsilon}
\newcommand{\la}{\lambda}
\newcommand{\va}{\varphi}
\newcommand{\ppp}{\partial}
\newcommand{\hhhh}{H^2(\Omega) \cap H^1_0(\Omega)}
\newcommand{\pppa}{\partial_t^{\alpha}}
\newcommand{\sumij}{\sum_{i,j=1}^d}
\newcommand{\sumn}{\sum_{n=1}^{\infty}}
\newcommand{\eeaa}{E_{\alpha,1}(-\lambda_nt^{\alpha})}
\newcommand{\DDDD}{\mathcal{D}}
\newcommand{\R}{\mathbb{R}}
\newcommand{\C}{\mathbb{C}} 
\newcommand{\N}{\mathbb{N}}
\newcommand{\ooo}{\overline}
\newcommand{\OOO}{\Omega}
\title[]{
\vspace{0.5cm}
Well-posedness for the backward problems in time for general time-fractional 
diffusion equation}
\author[Giuseppe Floridia,
Zhiyuan Li,
Masahiro Yamamoto]{
$^{1,*}$ Giuseppe Floridia,
$^2$ Zhiyuan Li,
$^{3,4,5}$ Masahiro Yamamoto }
\thanks{\footnotesize{
\noindent$^{1}$
Department PAU, 
Universit\`a Mediterranea di Reggio Calabria,
Via dell'Universit\`a 25  
89124 Reggio Calabria, Italy \&
INdAM Unit, University of Catania, Italy,
{\tt floridia.giuseppe@icloud.com},\;
$^{*}${Corresponding author}\\
$^2$ 
School of Mathematics and Statistics, Shandong University of Technology, 
Zibo, Shandong 255049, People's Republic of China,
{\tt zyli@sdut.edu.cn}\\
$^3$ Graduate School of Mathematical Sciences, The University
of Tokyo, 
Komaba, Meguro, Tokyo 153-8914, Japan \\
$^4$ Honorary Member of Academy of Romanian Scientists, 
Splaiul Independentei Street, no 54,
050094 Bucharest Romania \\
$^5$ Peoples' Friendship University of Russia 
(RUDN University) 6 Miklukho-Maklaya St, Moscow, 117198, Russian Federation,
{\tt myama@ms.u-tokyo.ac.jp}\;\,.
}}
\date{}
\begin{document}
\maketitle

\baselineskip 18pt

\begin{abstract}
\footnotesize{\smaller\smaller
In this article, we consider an evolution partial differential equation 
with Caputo time-derivative with the zero Dirichlet boundary condition: 
$\pppa u + Au = F$ where $0< \alpha < 1$ and 
the principal part $-A$, is a non-symmetric elliptic operator of 
the second order.
Given a source F, we prove the well-posedness for the 
backward problem in time 
and our result generalizes the existing results assuming that 
$-A$ is symmetric.
The key is a perturbation argument and the completeness of the 
ge\-neralized eigenfunctions of the elliptic operator $A$.\\
{\bf Key words:}  
fractional PDE, backward problem,
well-posedness
\\
{\bf AMS subject classifications:}
35R11, 34A12\,.}
\end{abstract}

\section{Introduction and main results}

Let $\OOO$ be a bounded domain in $\R^d$ with sufficiently smooth 
boundary $\ppp\OOO$.  
Henceforth let $L^2(\OOO)$ denote the real Lebesgue space with the scalar 
product $(\cdot, \cdot)$ and the norm $\Vert \cdot\Vert$, and let 
$H^1(\OOO), H^1_0(\OOO), H^2(\OOO)$ be the Sobolev spaces
(e.g., Adams \cite{Ad}).  By $\Vert u\Vert_{H^2(\OOO)}$ we denote the norm in 
$H^2(\OOO)$ for example.

We consider a fractional partial differential equation:
\begin{equation}\label{(1.1)}
\left\{ \begin{array}{rl}
&\pppa u(x,t) = -Au(x,t) + F(x,t), \quad x \in \OOO, \,0<t<T, \\
& u\vert_{\ppp\OOO} = 0, \\
& u(x,0) = a(x), \quad x \in \OOO.                
\end{array}\right.
\end{equation}
Here $-A$ is a uniformly elliptic operator and not necessarily 
symmetric.   Throughout this article, we assume that $0 < \alpha < 1$, and the 
Caputo derivative $\pppa g$ is defined by 
$$
\pppa g(t) = \frac{1}{\Gamma(1-\alpha)} \int^t_0 (t-s)^{-\alpha}
\frac{dg}{ds}(s) ds, 
$$
where $\Gamma$ denotes the gamma function.  It is known that 
there exists a unique solution $u=u(x,t)$ to 
the initial boundary value problem \eqref{(1.1)} under suitable conditions on 
$A$, $a$ and $F$, and we refer for example to Gorenflo, Luchko and 
Yamamoto \cite{GLY}, Kubica, Ryszewska and Yamamoto \cite{KRY},
Kubica and Yamamoto \cite{KY}, Sakamoto and Yamamoto \cite{SY},
Zacher \cite{Za}, and also later as lemmata we will show the regularity.

Equation \eqref{(1.1)} describes slow diffusion which can be considered as 
anomalous diffusion in highly heterogeneous media and is different 
from the classical case of $\alpha=1$.
In particular, the Caputo derivative is involved with memory term 
which possesses some averaging effect, and so 
\eqref{(1.1)} has not strong smoothing property: for $a\in L^2(\OOO)$, we can
expect only $u(\cdot,t) \in H^2(\OOO)$ with each $t>0$.
This is an essential difference from the case of $\alpha=1$.

Now we will formulate our problem and results.
For $v\in H^2(\OOO),$ we set 
\begin{equation}\label{(1.2)}
-Av(x) := \sumij \ppp_i(a_{ij}(x)\ppp_jv)(x)
+ \sum_{j=1}^d b_j(x)\ppp_jv(x) + c(x)v(x), 
\end{equation}
where
$$
a_{ij} = a_{ji} \in C^1(\ooo{\OOO}), \quad b_j, c \in C^1(\ooo{\OOO}),
\quad 1\le i,j \le d
$$
and there exists a constant $\kappa>0$ such that 
$$
\sumij a_{ij}(x)\xi_i\xi_j \ge \kappa \sum_{j=1}^d \xi_j^2,
\quad x \in \ooo{\OOO}, \, \xi_1, ..., \xi_d \in \R.
$$
We consider 
\begin{equation}\label{(1.3)}
\left\{ \begin{array}{rl}
& \pppa u(x,t) = -Au(x,t), \quad x\in \OOO, \, 0<t<T, \\
& u\vert_{\ppp\OOO} = 0, \\
& u(\cdot,T) = b
\end{array}\right.                      
\end{equation}
with $b \in H^2(\OOO) \cap H^1_0(\OOO)$.

We state our first main result.
\begin{thm}\label{Theorem 1}
For each $b\in H^2(\OOO)\cap H^1_0(\OOO)$, there exists a unique 
solution $u \in C([0,T];L^2(\OOO)) \cap C((0,T];\hhhh)$ to \eqref{(1.3)} such that 
$\pppa u \in C((0,T];L^2(\OOO))$.  Moreover we can choose constants
$C_1, C_2 > 0$ depending on $T$ such that 
\begin{equation}\label{(1.4)}
C_1\Vert u(\cdot,0)\Vert_{L^2(\OOO)} 
\le \Vert u(\cdot,T)\Vert_{H^2(\OOO)} 
\le C_2\Vert u(\cdot,0)\Vert_{L^2(\OOO)}.    
\end{equation}
\end{thm}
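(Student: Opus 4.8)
The plan is to reduce the terminal-value problem \eqref{(1.3)} to the invertibility of the solution operator of the corresponding forward problem, carrying out the spectral decomposition familiar from the symmetric case but with the orthonormal eigenbasis replaced by the system of generalized eigenfunctions of $A$. First I would fix the spectral set-up. Write $A = A_0 + B$, where $-A_0 v = \sumij \ppp_i(a_{ij}\ppp_j v) - c_0 v$ with $c_0>0$ chosen so large that $A_0$ is self-adjoint and positive on $\hhhh$, and $B$ collects the first-order part $\sum_j b_j\ppp_j$ and the zeroth-order remainder. Since $B$ is of first order, $\Vert Bv\Vert \le \ep\Vert A_0 v\Vert + C_\ep\Vert v\Vert$ for every $\ep>0$, i.e. $B$ is $A_0$-bounded with $A_0$-bound $0$; hence $A$ has compact resolvent and discrete spectrum $\{\la_n\}$ with $\mathrm{Re}\,\la_n$ bounded below and $\mathrm{Re}\,\la_n\to+\infty$, the modulus asymptotics $|\la_n|\asymp n^{2/d}$, and Riesz projections $P_n = \frac{1}{2\pi i}\int_{\gamma_n}(z-A)^{-1}\,dz$ that are quadratically close (after grouping the finitely many low modes, if needed) to those of $A_0$. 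Combined with the completeness of the root system of a uniformly elliptic operator (Agmon, Browder), this yields that the root spaces $M_n = P_n L^2(\OOO)$ form a Riesz basis of $L^2(\OOO)$, with $A\vert_{M_n} = \la_n I + N_n$, $N_n$ nilpotent, and $\Vert P_n\Vert,\ \Vert N_n\Vert$ controlled uniformly in $n$; in particular $\Vert v\Vert^2\asymp\sumn\Vert P_nv\Vert^2$ for $v\in L^2(\OOO)$, and $\Vert v\Vert_{H^2(\OOO)}\asymp\Vert Av\Vert$ (elliptic regularity) gives $\Vert v\Vert_{H^2(\OOO)}^2\asymp\sumn(1+|\la_n|^2)\Vert P_nv\Vert^2$ for $v\in\hhhh$. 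This is the ``perturbation argument plus completeness'' advertised in the abstract.

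Next I would record the forward solution in this basis. By the well-posedness and regularity results quoted in the introduction, for $a\in L^2(\OOO)$ the forward problem $\pppa u = -Au$, $u\vert_{\ppp\OOO}=0$, $u(\cdot,0)=a$, has a unique solution, and its projection $U_n = P_n u$ solves the finite-dimensional fractional ODE $\pppa U_n = -(\la_n I + N_n)U_n$, hence $U_n(t) = S_n(t)P_na$ with $S_n(t) := E_{\alpha,1}(-(\la_n I + N_n)t^\alpha)$, the Mittag--Leffler function of the matrix $(\la_n I+N_n)t^\alpha$, a finite linear combination of the scalar values $E_{\alpha,1}^{(j)}(-\la_nt^\alpha)$ against powers of $N_n$. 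The estimate $\Vert S_n(t)\Vert_{\mathcal L(M_n)}\le C(1+|\la_n|t^\alpha)^{-1}$ for $0<t\le T$ then shows that $u(t)=\sumn S_n(t)P_na$ lies in $C([0,T];L^2(\OOO))\cap C((0,T];\hhhh)$ with $\pppa u\in C((0,T];L^2(\OOO))$, precisely the regularity class in the statement. Evaluating at $t=T$ defines the solution operator $\mathcal S(T)a := u(\cdot,T) = \sumn S_n(T)P_na$, and by the norm equivalences above $\Vert\mathcal S(T)a\Vert_{H^2(\OOO)}^2\asymp\sumn(1+|\la_n|^2)\Vert S_n(T)P_na\Vert^2$.

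It then remains to show that $\mathcal S(T):L^2(\OOO)\to\hhhh$ is bijective with the two-sided bound \eqref{(1.4)}. Since $N_n$ is nilpotent, in a Jordan basis of $M_n$ the matrix $S_n(T)$ is triangular with all diagonal entries equal to $E_{\alpha,1}(-\la_nT^\alpha)$, so $S_n(T)$ is invertible if and only if $E_{\alpha,1}(-\la_nT^\alpha)\neq 0$, and then $\Vert S_n(T)^{-1}\Vert_{\mathcal L(M_n)}\asymp(1+|\la_n|)$. For all but finitely many $n$ one has $\arg\la_n\to 0$, so $-\la_nT^\alpha$ lies in a sector about the negative real axis where $E_{\alpha,1}(w)=-\frac{1}{\Gamma(1-\alpha)w}(1+O(|w|^{-1}))$ as $|w|\to\infty$; this gives $|E_{\alpha,1}(-\la_nT^\alpha)|\asymp(1+|\la_n|)^{-1}$, in particular non-vanishing. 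For the finitely many remaining $\la_n$ one uses the localization of the zeros of $E_{\alpha,1}$ in the sector $\{|\arg z|<\alpha\pi/2\}$ together with the parabolic localization of the spectrum of $A$ (which follows from the quadratic form of $A$) to rule out $E_{\alpha,1}(-\la_nT^\alpha)=0$. With $\Vert S_n(T)^{-1}\Vert\asymp 1+|\la_n|$ in hand, for $b\in\hhhh$ set $P_na := S_n(T)^{-1}P_nb$; then
\begin{equation*}
\Vert a\Vert^2\asymp\sumn\Vert S_n(T)^{-1}P_nb\Vert^2\le C\sumn(1+|\la_n|)^2\Vert P_nb\Vert^2\asymp\Vert b\Vert_{H^2(\OOO)}^2 ,
\end{equation*}
so $a:=\sumn P_na\in L^2(\OOO)$ with $\Vert u(\cdot,0)\Vert=\Vert a\Vert\le C_1^{-1}\Vert b\Vert_{H^2(\OOO)}$, while conversely
\begin{equation*}
\Vert b\Vert_{H^2(\OOO)}^2\asymp\sumn(1+|\la_n|^2)\Vert S_n(T)P_na\Vert^2\le C\sumn\Vert P_na\Vert^2\asymp\Vert a\Vert^2 ,
\end{equation*}
which is the upper bound in \eqref{(1.4)}. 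Finally $u(t):=\sumn S_n(t)P_na$ solves \eqref{(1.3)} in the regularity class above with $u(\cdot,T)=b$, and uniqueness follows from uniqueness for the forward problem: any solution of \eqref{(1.3)} in that class is the forward solution issuing from its own value at $t=0$, which $\mathcal S(T)$ determines from $b$ by the injectivity just proved.

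I expect the main obstacles to be exactly the two points flagged above. The first is functional-analytic: for the non-self-adjoint $A$ the generalized eigenfunctions need not form a Riesz basis without grouping, and obtaining the equivalences $\Vert\cdot\Vert^2\asymp\sum\Vert P_n\cdot\Vert^2$ and $\Vert\cdot\Vert_{H^2(\OOO)}^2\asymp\sum(1+|\la_n|^2)\Vert P_n\cdot\Vert^2$ genuinely uses the splitting $A=A_0+B$ with $B$ infinitesimally $A_0$-bounded together with completeness of the root system. The second is the global, quantitative non-vanishing $|E_{\alpha,1}(-\la_nT^\alpha)|\ge c(1+|\la_n|)^{-1}$ for \emph{every} $n$: the large-$n$ range is handled by the asymptotics of the Mittag--Leffler function, but the finitely many complex eigenvalues with small or negative real part must be controlled separately via the known zero-free region of $E_{\alpha,1}$.
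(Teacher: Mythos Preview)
Your strategy is genuinely different from the paper's, and its load-bearing step---the Riesz basis property for the root subspaces of $A$, with the norm equivalences $\Vert v\Vert^2\asymp\sum_n\Vert P_nv\Vert^2$ and $\Vert v\Vert_{H^2(\OOO)}^2\asymp\sum_n(1+|\la_n|^2)\Vert P_nv\Vert^2$---is asserted but not established by the ingredients you list. Completeness of the root system (Agmon) together with $B$ having $A_0$-bound $0$ does \emph{not} imply a Riesz basis: for that you would need a Bari-type theorem via quadratic closeness of the Riesz projections, or a Markus--Matsaev subordination result, either of which requires resolvent estimates along suitable contours that you have not supplied and that are nontrivial for a first-order perturbation of a second-order operator. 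Your bound $\Vert S_n(T)^{-1}\Vert_{\mathcal L(M_n)}\asymp 1+|\la_n|$ further needs uniform-in-$n$ control of the Jordan block sizes and of the off-diagonal entries built from $E_{\alpha,1}^{(j)}(-\la_nT^{\alpha})$, which you do not address. You correctly flag both points as obstacles, but as written the argument does not close.

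The paper sidesteps all of this by a Fredholm argument. It represents the forward solution through the \emph{symmetric} propagators $S(t),K(t)$ of $A_0$, so that $u_a(T)=S(T)a+\int_0^TK(T-s)Bu_a(s)\,ds$, and rewrites the backward problem as $(1+L)a=S(T)^{-1}b$ with $La:=S(T)^{-1}\int_0^TK(T-s)Bu_a(s)\,ds$. A smoothing estimate shows that $L_0a:=\int_0^TK(T-s)Bu_a(s)\,ds$ maps $L^2(\OOO)$ boundedly into $\DDDD(A_0^{1+\delta_0})\subset H^{2+2\delta_0}(\OOO)$, hence $L$ is compact on $L^2(\OOO)$; the stability bound \eqref{(1.4)} then follows immediately from the bounded inverse of $1+L$, with no need for any uniform blockwise estimate. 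The root-space projections $P_n$ of $A$ enter only at the very end, and only \emph{qualitatively}, to prove injectivity: from $u(T)=0$ one obtains for each $n$ a finite upper-triangular system with diagonal entry $E_{\alpha,1}(-\mu_nT^{\alpha})$, so $P_na=0$ for every $n$, and Agmon completeness (Lemma \ref{Lemma 4}) yields $a=0$. Thus the paper never needs a Riesz basis or quantitative $n$-uniform inverses; the quantitative part of \eqref{(1.4)} comes for free from Fredholm theory. Your more explicit route would, if the Riesz-basis machinery could be supplied, yield sharper information, but the paper's softer argument reaches the theorem with much less.
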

 
To the best knowledge of the authors, Sakamoto and Yamamoto \cite{SY}
is the first work for the well-posedness of the backward problem in time
for the case of symmetric $A$, that is, $b_j\equiv 0$ for $1\le j \le d$.
Moreover by a technical reason, \cite{SY} assumes that $c\le 0$.   
As for backward problems for time-fractional equations with 
symmetric $A$, we can refer to many works: Liu and Yamamoto \cite{LY},
Tuan, Huynh, Ngoc, and Zhou \cite{THNZ}. 
In particular, as for numerical approaches, see Tuan, Long and Tatar 
\cite{TLT}, Tuan, Thach, O'Regan, and Can \cite{TNOZ}.
Wang and Liu \cite{WL1, WL2},
Wang, Wei and Zhou \cite{WWZ}, Wei and Wang \cite{WW}, Xiong, Wang and Li 
\cite{XWL}, Yang and Liu \cite{YL} and the references therein.
However, we do not find the results for non-symmetric $A$.
Originally the backward well-posedness comes from the time fractional 
derivative 
$\pppa$, and should not rely on the symmetry of the elliptic operator
$A$, and Theorem \ref{Theorem 1} is a natural generalization of the existing results 
since \cite{SY} to the case of a general uniform elliptic operator $A$.
As is seen by the proof, we can further prove\\
\begin{cor}
In Theorem \ref{Theorem 1}, for each distinct $T_1, T_2 > 0$, there exist
contants $C_3=C_3(T_1,T_2)>0$ and $C_4=C_4(T_1,T_2)>0$ such that 
$$
C_3\Vert u(\cdot,T_2)\Vert_{H^2(\OOO)} 
\le \Vert u(\cdot,T_1)\Vert_{H^2(\OOO)} 
\le C_4\Vert u(\cdot,T_2)\Vert_{H^2(\OOO)}. 
$$
\end{cor}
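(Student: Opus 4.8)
The plan is to deduce the Corollary from Theorem \ref{Theorem 1} by applying the theorem on two different time intervals and using the forward solution operator to relate the states at $T_1$ and $T_2$. Fix a solution $u$ of \eqref{(1.3)} on $[0,T]$ (with $T$ large enough to contain both $T_1$ and $T_2$, or one may simply work on $[0, \max\{T_1,T_2\}]$), and set $a = u(\cdot,0) \in L^2(\OOO)$. The key observation is that $u$ restricted to $[0,T_i]$ is the \emph{forward} solution of the initial boundary value problem \eqref{(1.1)} with $F\equiv 0$ and initial value $a$, and simultaneously it is the solution of the backward problem \eqref{(1.3)} on $[0,T_i]$ with final value $b_i := u(\cdot,T_i) \in \hhhh$. (That $u(\cdot,T_i)\in \hhhh$ for $T_i>0$ follows from the regularity $u\in C((0,T];\hhhh)$ asserted in Theorem \ref{Theorem 1}.)

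The main step is then just an application of the two-sided estimate \eqref{(1.4)} on each interval. Applying Theorem \ref{Theorem 1} on $[0,T_1]$ gives constants $C_1(T_1), C_2(T_1)>0$ with
\begin{equation*}
C_1(T_1)\Vert a\Vert_{L^2(\OOO)} \le \Vert u(\cdot,T_1)\Vert_{H^2(\OOO)} \le C_2(T_1)\Vert a\Vert_{L^2(\OOO)},
\end{equation*}
and applying it on $[0,T_2]$ gives the analogous inequalities with $T_1$ replaced by $T_2$. Eliminating $\Vert a\Vert_{L^2(\OOO)}$ between the two chains yields
\begin{equation*}
\frac{C_1(T_1)}{C_2(T_2)}\Vert u(\cdot,T_2)\Vert_{H^2(\OOO)} \le \Vert u(\cdot,T_1)\Vert_{H^2(\OOO)} \le \frac{C_2(T_1)}{C_1(T_2)}\Vert u(\cdot,T_2)\Vert_{H^2(\OOO)},
\end{equation*}
so that $C_3 := C_1(T_1)/C_2(T_2)$ and $C_4 := C_2(T_1)/C_1(T_2)$ are the desired constants, both depending only on $T_1$ and $T_2$ (through the constants in Theorem \ref{Theorem 1}). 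The case where one of $T_1, T_2$ is the same as the ambient $T$ needs no special treatment.

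The only genuine point to check — and the place where one must be slightly careful rather than the "hard part" in any real sense — is the \emph{consistency} of the forward and backward pictures: namely that the unique solution furnished by Theorem \ref{Theorem 1} on $[0,T]$ does restrict to the unique solution on $[0,T_i]$, so that $\Vert u(\cdot,0)\Vert_{L^2(\OOO)}$ is literally the same quantity $\Vert a\Vert$ in both applications of \eqref{(1.4)}. This is immediate from uniqueness: the restriction of $u$ to $[0,T_i]$ solves \eqref{(1.3)} with final datum $u(\cdot,T_i)$, and by the uniqueness part of Theorem \ref{Theorem 1} it is \emph{the} solution, whose value at $t=0$ is $u(\cdot,0)=a$. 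Hence no new analysis is needed; the Corollary is a formal consequence of Theorem \ref{Theorem 1} applied twice.
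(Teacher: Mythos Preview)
Your proposal is correct and is precisely the argument the paper has in mind: the paper does not spell out a separate proof of the Corollary but only remarks ``As is seen by the proof, we can further prove\ldots'', i.e., the constants $C_1,C_2$ in \eqref{(1.4)} depend on the final time, so applying \eqref{(1.4)} once with $T=T_1$ and once with $T=T_2$ and eliminating $\Vert a\Vert_{L^2(\OOO)}$ yields the claim. Your care in noting that $u(\cdot,T_i)\in\hhhh$ (from $u\in C((0,T];\hhhh)$) and that uniqueness makes the restricted solution coincide with the one furnished by Theorem~\ref{Theorem 1} on $[0,T_i]$ fills in exactly the detail the paper leaves implicit.
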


Furthermore we can show also the backward well-posedness with the 
presence of a non-homogeneous term $F$.  \\
For the formulation, 
we introduce some function spaces.
Let 
\begin{equation}\label{(1.5)}
-A_0v(x) = \sumij \ppp_i(a_{ij}(x)\ppp_jv), \quad 
\mathcal{D}(A_0) = \hhhh.                
\end{equation}
Then it is known that the specrum $\sigma(A_0)$ consists entirely of 
eigenvalues with finite multiplicities and according to 
the multiplicities we number:
\begin{equation}\label{(1.6)}
0 < \la_1 \le \la_2 \le \la_3 < \cdots .        
\end{equation}
Also we know that we can choose eigenfunctions $\va_n$ 
for $\la_n$, $n \in \N$ such that $\{\va_n\}_{n\in \N}$ is an orthonormal
basis in $L^2(\OOO)$.
Then we can define the fractional power $A_0^{\gamma}$ with 
$\gamma\ge 0$:
\begin{equation}\label{(1.7)}
\left\{ \begin{array}{rl}
& A_0^{\gamma}v = \sum_{n=1}^{\infty} \lambda_n^{\gamma}(v, \va_n)\va_n,\cr\\
&\mathcal{D}(A_0^{\gamma}) = \left\{ v\in L^2(\OOO);\, 
\sum_{n=1}^{\infty} \lambda_n^{2\gamma}\vert (v, \va_n)\vert^2 < \infty
\right\}, \cr\\
& \Vert A_0^{\gamma}v\Vert = \left(
\sum_{n=1}^{\infty} \lambda_n^{2\gamma}\vert (v, \va_n)\vert^2\right)
^{\frac{1}{2}}. \cr
\end{array}\right.
\end{equation}
We can refer for example to Pazy \cite{Pa} and we can derive (1.7) directly 
from 
\begin{align*}
& A_0v = \sum_{n=1}^{\infty} \lambda_n(v, \va_n)\va_n,\\
&\mathcal{D}(A_0) = \left\{ v\in L^2(\OOO);\, 
\sum_{n=1}^{\infty} \lambda_n^2\vert (v, \va_n)\vert^2 < \infty
\right\}.
\end{align*}
Moreover we know that 
$\mathcal{D}(A_0^{\frac{1}{2}}) = H^1_0(\OOO)$,
$\DDDD(A_0^{\gamma}) \subset H^{2\gamma}(\OOO)$.
Henceforth we set $\Vert v\Vert_{\DDDD(A_0^{\gamma})}
= \Vert A_0^{\gamma}v\Vert$.  

Now we are ready to state the well-posedness with non-homogeneous term.

\begin{thm}\label{Theorem 2}
Let $F \in L^{\infty}(0,T;\DDDD(A_0^{\ep}))$ with some 
$\ep > 0$.  For each $b \in \hhhh$, there exists a unique solution
$$u \in C((0,T]; \hhhh) \cap C([0,T];L^2(\OOO))$$ to 
$$
\left\{ \begin{array}{rl}
& \pppa u = -Au + F(x,t), \quad x \in \OOO, \, 0<t<T, \\
& u\vert_{\ppp\OOO} = 0, \\
& u(\cdot,T) = b
\end{array}\right.                   
$$
and we can choose a constant $C>0$ such that 
$$
\Vert u(\cdot,0)\Vert 
\le C(\Vert u(\cdot,T)\Vert_{H^2(\OOO)}
+ \Vert F\Vert_{L^{\infty}(0,T;\DDDD(A_0^{\ep}))}).
$$
\end{thm}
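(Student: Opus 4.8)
\textbf{Proof proposal for Theorem~\ref{Theorem 2}.}
The plan is to reduce the inhomogeneous backward problem to the homogeneous one solved in Theorem~\ref{Theorem 1}, by peeling off a \emph{forward} solution carrying the source. Let $v$ denote the solution of the initial--value problem
\begin{equation*}
\pppa v = -Av + F \quad\text{in } \OOO\times(0,T), \qquad v|_{\ppp\OOO}=0, \qquad v(\cdot,0)=0,
\end{equation*}
and put $w:=u-v$; then $w$ must solve \eqref{(1.3)} with final value $b-v(\cdot,T)$. Hence, granted that $v$ exists, that $v\in C([0,T];L^2(\OOO))$ with $v(\cdot,0)=0$, and---crucially---that $v(\cdot,T)\in\hhhh$ with $\Vert v(\cdot,T)\Vert_{H^2(\OOO)} \le C\Vert F\Vert_{L^\infty(0,T;\DDDD(A_0^\ep))}$, Theorem~\ref{Theorem 1} applied with $b$ replaced by $b-v(\cdot,T)\in\hhhh$ produces a unique $w\in C([0,T];L^2(\OOO))\cap C((0,T];\hhhh)$, and $u:=v+w$ lies in the claimed class. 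Since $v(\cdot,0)=0$ we have $u(\cdot,0)=w(\cdot,0)$, so the left inequality in \eqref{(1.4)} gives $\Vert u(\cdot,0)\Vert\le C_1^{-1}\Vert w(\cdot,T)\Vert_{H^2(\OOO)}\le C_1^{-1}\bigl(\Vert b\Vert_{H^2(\OOO)}+\Vert v(\cdot,T)\Vert_{H^2(\OOO)}\bigr)$, which is exactly the asserted bound because $\Vert u(\cdot,T)\Vert_{H^2(\OOO)}=\Vert b\Vert_{H^2(\OOO)}$. Uniqueness is free: the difference of two solutions solves the homogeneous backward problem with zero final datum and belongs to the uniqueness class of Theorem~\ref{Theorem 1}, hence is $0$.

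It remains to solve the forward problem for the non-symmetric operator $A$ and to establish the regularity of $v(\cdot,T)$; this is where the perturbation argument is used. Split $A=A_0+B$ with $Bv:=\sum_{j=1}^d b_j\ppp_jv+cv$, a first-order operator with $\DDDD(A)=\DDDD(A_0)=\hhhh$. Using the orthonormal eigenbasis $\{\va_n\}$ of $A_0$, I would construct $v$ as the fixed point of $z\mapsto v_z$, where $v_z$ solves the \emph{symmetric} problem $\pppa v_z+A_0v_z=F-Bz$, $v_z|_{\ppp\OOO}=0$, $v_z(\cdot,0)=0$, i.e.
\begin{equation*}
v_z(t)=\izt (t-s)^{\alpha-1}\sumn E_{\alpha,\alpha}(-\la_n(t-s)^\alpha)\,(F(s)-Bz(s),\va_n)\,\va_n\,ds.
\end{equation*}
Because $\DDDD(A_0^\ep)\hookrightarrow L^2(\OOO)$ we have $F\in L^\infty(0,T;L^2(\OOO))$, while $z\in C([0,T];\DDDD(A_0^{1/2}))$ implies $Bz\in C([0,T];L^2(\OOO))$ with $\Vert Bz\Vert\le C\Vert z\Vert_{\DDDD(A_0^{1/2})}$; the standard smoothing estimates $\Vert A_0^\beta E_{\alpha,\alpha}(-A_0\tau^\alpha)\Vert\le C\tau^{-\alpha\beta}$ for $0\le\beta\le 1$ (cf. \cite{SY}, \cite{KRY}) then give $\Vert A_0^{1/2}v_z(t)\Vert\le C\izt (t-s)^{\alpha/2-1}\bigl(\Vert F(s)\Vert+\Vert Bz(s)\Vert\bigr)ds$. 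Thus $z\mapsto v_z$ maps $C([0,T];\DDDD(A_0^{1/2}))$ into itself and, the kernel $(t-s)^{\alpha/2-1}$ being integrable, its iterates contract by a Gronwall-type estimate; this yields existence and uniqueness of $v\in C([0,T];\DDDD(A_0^{1/2}))$, and $\Vert v(t)\Vert\le Ct^\alpha\Vert F-Bv\Vert_{L^\infty(0,T;L^2(\OOO))}$ gives $v\in C([0,T];L^2(\OOO))$ with $v(\cdot,0)=0$.

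To reach $v(\cdot,T)\in\hhhh$ I would bootstrap in the same Duhamel formula with right-hand side $g:=F-Bv$. First, $g\in L^\infty(0,T;L^2(\OOO))$ together with $\Vert A_0^{1/2+\ep'}E_{\alpha,\alpha}(-A_0\tau^\alpha)\Vert\le C\tau^{-\alpha(1/2+\ep')}$ gives, for any $\ep'<1/2$, $\Vert A_0^{1/2+\ep'}v(t)\Vert\le C\izt (t-s)^{\alpha(1/2-\ep')-1}ds\,\Vert g\Vert_{L^\infty(0,T;L^2(\OOO))}<\infty$, hence $v\in L^\infty(0,T;\DDDD(A_0^{1/2+\ep'}))$. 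Since $\DDDD(A_0^\ep)\hookrightarrow\DDDD(A_0^{\ep''})$ for $\ep''\le\ep$ we may assume $\ep<1/4$; then the first-order operator $B$ maps $\DDDD(A_0^{1/2+\ep})\subset H^{1+2\ep}(\OOO)$ boundedly into $H^{2\ep}(\OOO)=\DDDD(A_0^\ep)$ (no boundary condition being active below Sobolev order $1/2$), so $g\in L^\infty(0,T;\DDDD(A_0^\ep))$ with $\Vert g\Vert_{L^\infty(0,T;\DDDD(A_0^\ep))}\le C(\Vert F\Vert_{L^\infty(0,T;\DDDD(A_0^\ep))}+\Vert v\Vert_{C([0,T];\DDDD(A_0^{1/2}))})\le C\Vert F\Vert_{L^\infty(0,T;\DDDD(A_0^\ep))}$. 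Feeding this back and using $\Vert A_0^{1-\ep}E_{\alpha,\alpha}(-A_0\tau^\alpha)\Vert\le C\tau^{-\alpha(1-\ep)}$,
\begin{equation*}
\Vert A_0 v(t)\Vert\le C\izt (t-s)^{\alpha-1}\Vert A_0^{1-\ep}E_{\alpha,\alpha}(-A_0(t-s)^\alpha)\Vert\,\Vert A_0^\ep g(s)\Vert\,ds\le C\izt (t-s)^{\alpha\ep-1}ds\,\Vert g\Vert_{L^\infty(0,T;\DDDD(A_0^\ep))},
\end{equation*}
the integral converging since $\alpha\ep-1>-1$, whence $\Vert A_0v(t)\Vert\le CT^{\alpha\ep}\Vert F\Vert_{L^\infty(0,T;\DDDD(A_0^\ep))}$ for every $t\in(0,T]$; by elliptic regularity $\Vert v(t)\Vert_{H^2(\OOO)}\le C(\Vert A_0v(t)\Vert+\Vert v(t)\Vert)$, which gives the bound at $t=T$, and a parallel dominated-convergence argument upgrades this to $v\in C((0,T];\hhhh)$, with $\pppa v=-A_0v-Bv+F$ holding in $L^2(\OOO)$ for each $t\in(0,T]$. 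This closes the reduction.

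\emph{Main obstacle.} The delicate part is exactly this regularity gain for the forward problem under a \emph{non-symmetric} $A$: one must climb the scale $L^2(\OOO)\to\DDDD(A_0^{1/2})\to\DDDD(A_0^\ep)\to\DDDD(A_0)$, and at each rung the Duhamel kernel is only borderline integrable, so the fractional exponents and the passage of the first-order perturbation $B$ between the spaces $\DDDD(A_0^\gamma)$ must be tracked carefully (in particular the harmless reduction to $\ep<1/4$, which keeps $B$ mapping within the scale without producing spurious boundary conditions). An alternative, closer to the phrasing of the abstract, would construct the solution operators for the full operator $A$ directly from the completeness of its generalized eigenfunctions, expanding $F$ and $b$ in that (non-orthogonal) system; the analytic estimates required are the same as above.
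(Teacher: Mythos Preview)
Your reduction is exactly the paper's: subtract a forward solution of $\pppa w=-Aw+F$, $w(0)=0$, and apply Theorem~\ref{Theorem 1} to the remainder with final datum $b-w(T)$. The only difference is organizational: the paper has already packaged the forward regularity you re-derive (existence of $w$ and the bound $\Vert w(t)\Vert_{H^2(\OOO)}\le C\Vert A_0^{\ep}F\Vert_{L^\infty(0,T;L^2(\OOO))}$ for the non-symmetric $A$) as Lemma~\ref{Lemma 2}, proved by applying $A_0$ to the Duhamel formula and invoking a generalized Gronwall inequality, so its proof of Theorem~\ref{Theorem 2} is only a few lines; your fixed-point construction in $C([0,T];\DDDD(A_0^{1/2}))$ followed by a bootstrap is a valid alternative route to the same lemma.
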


The article is composed of three sections.  In Section \ref{sec2}, we show
fundamental properties of the fractional differential equations and 
Section \ref{sec3} is devoted to the proofs of Theorems \ref{Theorem 1} and \ref{Theorem 2}.
\section{Preliminaries}\label{sec2}

Let us recall \eqref{(1.5)} and \eqref{(1.6)}.  For $0 < \alpha < 1$ and 
$\beta > 0$, by $E_{\alpha,\beta}(z)$ we denote the 
Mittag-Leffler function with two parameters:
$$
E_{\alpha,\beta}(z) = \sum_{k=0}^{\infty} \frac{z^k}{\Gamma(\alpha k + \beta)}
$$
(e.g., Podlubny \cite{Po}).
Then $E_{\alpha,\beta}(z)$ is an entire function in $z\in \C$.  We set
$$
S(t)a = \sum_{n=0}^{\infty} (a,\va_n)E_{\alpha,1}(-\la_nt^{\alpha})
\va_n(x), \quad t\ge 0
$$
and
$$
K(t)a = \sum_{n=0}^{\infty} t^{\alpha-1}E_{\alpha,\alpha}(-\la_nt^{\alpha})
(a,\va_n)\va_n(x), \quad t>0
$$
for $a \in L^2(\OOO)$.
\\

Henceforth we write $u(t) = u(\cdot,t)$, etc., and we regard $u$ as
a mapping defined in $(0,T)$ with values in $L^2(\OOO)$.
Moreover $u(t) \in H^1_0(\OOO)$ means $u(\cdot,t) = 0$ on 
$\ppp\OOO$ in the trace sense (e.g., \cite{Ad}).
Then we can see the following.

\begin{lem}\label{Lemma 1}
\begin{enumerate}
\item[(i)] There exists a constant $C>0$ such that 
\begin{equation}\label{(2.1)}
\Vert S(t)a\Vert\le C\Vert a\Vert, \quad t\ge 0
\end{equation}
and
\begin{equation}\label{(2.2)}
\Vert A_0S(t)a\Vert\le Ct^{-\alpha}\Vert a\Vert, \quad t>0.
\end{equation}
For $0 \le \gamma \le 1$, there exists a constant $C(\gamma)>0$ such that 
\begin{equation}\label{(2.3)}
\Vert A_0^{\gamma}K(t)a\Vert\le C(\gamma)t^{\alpha(1-\gamma)-1}
\Vert a\Vert, \quad t > 0.
\end{equation}
\item[(ii)] Let $G \in L^{\infty}(0,T;\DDDD(A_0^{\ep}))$ with some
$\ep>0$ and $a \in L^2(\OOO)$.  Then
\begin{equation}\label{(2.4)}
u(t) = S(t)a + \int^t_0 K(t-s) G(s) ds, \quad t>0    
\end{equation}
is in $C((0,T];\hhhh)$ and satisfies $\pppa u \in L^1(0,T;L^2(\OOO))$,
\begin{equation}\label{(2.5)}
\left\{ \begin{array}{rl}
& \pppa u(t) = -A_0u(t) + G(t), \quad t>0, \\
& \lim_{t\to 0} \Vert u(\cdot,t) - a\Vert = 0, \\
& u(\cdot,t) \in H^1_0(\OOO), \quad  0 < t < T.
\end{array}\right.
\end{equation}
\item[(iii)] For each $t>0$, there exists a constant $C>0$ such that 
$$
\Vert u(t)\Vert_{H^2(\OOO)} \le C(t^{-\alpha}\Vert a\Vert 
+ \Vert A_0^{\ep}G\Vert_{L^{\infty}(0,T;L^2(\OOO))}).
$$
\end{enumerate}
\end{lem}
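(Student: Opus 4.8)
\emph{Part (i).} The plan is to pass to the orthonormal eigenbasis $\{\va_n\}$ of $A_0$ and reduce every assertion of the lemma to scalar estimates on Mittag--Leffler functions; the one external ingredient I would use is the classical decay bound $|E_{\alpha,\beta}(-\eta)|\le C(\alpha,\beta)/(1+\eta)$ for $\eta\ge0$ (Podlubny \cite{Po}), together with the identity $\pppa[E_{\alpha,1}(\lat)]=-\lambda_nE_{\alpha,1}(\lat)$. By Parseval's identity $\|S(t)a\|^2=\sum_n|(a,\va_n)|^2|E_{\alpha,1}(\lat)|^2$, and $|E_{\alpha,1}(\lat)|\le C$ gives \eqref{(2.1)}. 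For \eqref{(2.2)} I would bound $\lambda_n|E_{\alpha,1}(\lat)|\le C\lambda_n/(1+\lambda_nt^{\alpha})\le Ct^{-\alpha}$, the last step being the inequality $x/(1+x)\le1$ applied to $x=\lambda_nt^{\alpha}$, and then apply Parseval once more. For \eqref{(2.3)}, writing $\lambda_n^{\gamma}=(\lambda_nt^{\alpha})^{\gamma}t^{-\alpha\gamma}$ and using $x^{\gamma}/(1+x)\le1$ for $x\ge0$, $0\le\gamma\le1$, gives $\lambda_n^{\gamma}t^{\alpha-1}|E_{\alpha,\alpha}(\lat)|\le C(\gamma)t^{\alpha(1-\gamma)-1}$, and a final Parseval step finishes (i).

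\emph{Part (ii).} I would split $u=S(t)a+w(t)$ with $w(t)=\int_0^tK(t-s)G(s)\,ds$, and may assume without loss of generality that $0<\ep\le1$ since $\DDDD(A_0^{\ep'})\subset\DDDD(A_0^{\ep})$ when $\ep\le\ep'$. Estimate \eqref{(2.2)} shows $S(t)a\in\DDDD(A_0)=\hhhh\subset\HHONE$ for $t>0$; continuity of $t\mapsto A_0S(t)a$ on each $[\delta,T]$ follows from uniform convergence of the series there (for $t\ge\delta$ one has $\lambda_n|E_{\alpha,1}(\lat)|\le C\delta^{-\alpha}$, so the tails are dominated by $\sum_{n>N}|(a,\va_n)|^2$), while $\lim_{t\to0}\|S(t)a-a\|=0$ comes from $E_{\alpha,1}(0)=1$ and dominated convergence in the Parseval sum; the identity $\pppa[E_{\alpha,1}(\lat)]=-\lambda_nE_{\alpha,1}(\lat)$ then gives $\pppa S(t)a=-A_0S(t)a$. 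For $w$, using $A_0K(\tau)h=A_0^{1-\ep}K(\tau)(A_0^{\ep}h)$ and applying \eqref{(2.3)} with $\gamma=1-\ep$ gives $\|A_0K(\tau)h\|\le C\tau^{\alpha\ep-1}\|A_0^{\ep}h\|$, hence
$$
\|A_0w(t)\|\le C\int_0^t(t-s)^{\alpha\ep-1}\,ds\;\|A_0^{\ep}G\|_{L^{\infty}(0,T;L^2(\OOO))}\le\frac{C}{\alpha\ep}\,t^{\alpha\ep}\,\|A_0^{\ep}G\|_{L^{\infty}(0,T;L^2(\OOO))},
$$
so $w(t)\in\DDDD(A_0)$ with a locally bounded norm; continuity of $w$ into $\DDDD(A_0)$ then follows from standard estimates for convolutions against the weakly singular integrable kernel $(t-s)^{\alpha\ep-1}$, and $\|w(t)\|\le Ct^{\alpha}\|A_0^{\ep}G\|_{L^{\infty}(0,T;L^2(\OOO))}\to0$ by \eqref{(2.3)} with $\gamma=0$. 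For the equation, I would use the scalar fact that $\psi_n(t):=\int_0^t(t-s)^{\alpha-1}E_{\alpha,\alpha}(-\lambda_n(t-s)^{\alpha})(G(s),\va_n)\,ds$ solves $\pppa\psi_n=-\lambda_n\psi_n+(G(t),\va_n)$ with $\psi_n(0)=0$; hence the $n$-th Fourier coefficient $u_n$ of $u$ satisfies $\pppa u_n=-\lambda_nu_n+(G(t),\va_n)$, $u_n(0)=(a,\va_n)$, and summing against $\va_n$ gives \eqref{(2.5)}. The interchange of $\pppa$ with $\sum_n$ I would justify by passing to the equivalent Volterra form $u(t)=a+\frac{1}{\Gamma(\alpha)}\int_0^t(t-s)^{\alpha-1}(-A_0u(s)+G(s))\,ds$, whose right-hand side is well defined in $L^2(\OOO)$ because, by Part (i), $\|A_0u(s)\|\le C(s^{-\alpha}\|a\|+\|A_0^{\ep}G\|_{L^{\infty}(0,T;L^2(\OOO))})$ and $\|G(s)\|\le\lambda_1^{-\ep}\|A_0^{\ep}G(s)\|$; since $s^{-\alpha}\in L^1(0,T)$ as $\alpha<1$, this simultaneously gives $\pppa u=-A_0u+G\in L^1(0,T;L^2(\OOO))$.

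\emph{Part (iii) and the main obstacle.} Part (iii) is then immediate: elliptic regularity together with $\lambda_1>0$ yields $\|v\|_{H^2(\OOO)}\le C\|A_0v\|$ for $v\in\DDDD(A_0)$, and combining \eqref{(2.2)} with the displayed bound on $\|A_0w(t)\|$ and $t^{\alpha\ep}\le T^{\alpha\ep}$ for $0<t\le T$ gives the stated inequality. The step I expect to be the main obstacle is the rigorous part of (ii): verifying that the Duhamel representation \eqref{(2.4)} genuinely produces a solution of \eqref{(2.5)} with $\pppa u\in L^1(0,T;L^2(\OOO))$ demands commuting the nonlocal operator $\pppa$ with the infinite eigenfunction expansion while simultaneously handling the two weak singularities of the kernels --- at $t=0$ for $S(t)a$ and at $s=t$ for $K(t-s)$ --- and this is exactly where the hypothesis $\ep>0$ is used, as it makes the exponent $\alpha\ep-1$ strictly greater than $-1$ so that the relevant time integrals converge.
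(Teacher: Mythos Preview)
Your proposal is correct and follows essentially the same route as the paper: Part (i) is identical (Parseval plus the Podlubny bound $|E_{\alpha,\beta}(-\eta)|\le C/(1+\eta)$ and the inequality $\xi^{\gamma}/(1+\xi)\le C$), and Part (iii) is the same combination of \eqref{(2.2)} with the $A_0^{1-\ep}K(t-s)A_0^{\ep}G$ estimate. The only presentational differences are in Part (ii): where you invoke ``standard convolution estimates'' for the continuity of $t\mapsto A_0\int_0^tK(t-s)G(s)\,ds$, the paper makes this explicit via the truncation $v_{\delta_0}(t)=\int_0^{t-\delta_0}A_0K(t-s)G(s)\,ds$ and uniform convergence as $\delta_0\to0$; and where you justify $\pppa u\in L^1(0,T;L^2(\OOO))$ through the Volterra integral form, the paper instead bounds $\Vert\pppa S(t)a\Vert\le Ct^{-\alpha}\Vert a\Vert$ directly and cites \cite{SY} (Theorem 2.2(i)) for the Duhamel term --- but these are equivalent ways of handling the same singularities you correctly identified as the main obstacle.
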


\begin{rmk}
We can prove stronger regularity of $\pppa u$ but the lemma is
sufficient for our purpose.
\end{rmk}
\begin{proof}{(of Lemma \ref{Lemma 1}).}\\
(i) We can refer to Gorenflo, Luchko and Yamamoto \cite{GLY}, and for
completeness we give the proof.  First we note 
\begin{equation}\label{(2.6)}
\vert E_{\alpha,1}(-\eta)\vert \le \frac{C}{1 + \eta},
\quad \eta > 0              
\end{equation}
(e.g., Theorem 1.6 (p.35) in Podlubny \cite{Po}).

Since $\{\va_n\}_{n\in \N}$ is an orthonormal basis in $L^2(\OOO)$, by \eqref{(2.6)} 
we have
\begin{multline*}
\Vert S(t)a\Vert^2 = \sumn \vert (a,\va_n)\vert^2
\vert \eeaa\vert^2\\
\le \sumn \vert (a,\va_n)\vert^2 \left(\frac{C}{1+\vert \la_nt^{\alpha}\vert}
\right)^2
\le C\sumn \vert (a,\va_n)\vert^2,
\end{multline*}
that is, \eqref{(2.1)} follows.

Next, since 
$$
A_0S(t)a = \sumn (a,\va_n) \la_n \eeaa \va_n,
$$
again by \eqref{(2.6)} we see
\begin{align*}
& \Vert A_0S(t)a\Vert^2 
= t^{-2\alpha}\sumn \vert (a,\va_n)\vert^2 \vert \la_nt^{\alpha}\vert^2
\vert \eeaa\vert^2\\
\le & Ct^{-2\alpha} \sumn \vert (a,\va_n)\vert^2 
\left( \frac{\vert \la_nt^{\alpha}\vert}{1+\vert \la_nt^{\alpha}\vert}
\right)^2, \quad t>0,
\end{align*}
which implies \eqref{(2.2)}.

By (1.7), we have
$$
A_0^{\gamma}K(t)a
= \sumn t^{\alpha-1}E_{\alpha,\alpha}(-\la_nt^{\alpha})
\la_n^{\gamma}(a,\va_n)\va_n,
$$
and so 
\begin{align*}
& \Vert A_0^{\gamma}K(t)a\Vert^2
\le t^{2\alpha-2}\sumn \frac{C}{(1+\vert \la_nt^{\alpha}\vert)^2}
\la_n^{2\gamma} \vert (a,\va_n)\vert^2\\
= &Ct^{2\alpha-2}\sumn \frac{\la_n^{2\gamma}t^{2\gamma\alpha}}
{(1+\vert \la_nt^{\alpha}\vert)^2}t^{-2\alpha\gamma}\vert (a,\va_n)\vert^2\\
\le &Ct^{2(\alpha-\alpha\gamma)-2}
\sup_{\xi\ge 0} \left( \frac{\xi^{\gamma}}{1+\xi} \right)^2
\sumn \vert (a,\va_n)\vert^2.
\end{align*}
By $0 \le \gamma \le 1$, we see that 
$\sup_{\xi\ge 0} \frac{\xi^{\gamma}}{1+\xi} < \infty$, and so \eqref{(2.3)}
can be seen.  Thus the proof of Lemma \ref{Lemma 1} (i) is complete.
\\
(ii) In terms of e.g., Theorem 4.1 in \cite{GLY} and Theorems 2.1 and 2.2 in 
\cite{SY}, we already know some regularity of $u(t)$.

By Theorem 2.1 (i) in \cite{SY} or by \eqref{(2.1)}, we can verify that
$S(t)a \in C([0,T];L^2(\OOO))$ and $\displaystyle\lim_{t\to 0} \Vert S(t)a - a \Vert = 0$.  
By \eqref{(2.2)}, we see that 
$$
A_0\left( \sum_{n=1}^N (a,\va_n)\eeaa \va_n\right)
$$
converges in $C([\delta,T];L^2(\OOO))$ as $N \to \infty$ with arbitrarily 
fixed $\delta>0$.  Therefore $A_0S(t)a \in C([\delta,T];L^2(\OOO))$, which 
implies 
\begin{equation}\label{(2.7)}
S(t)a \in C([\delta,T];\DDDD(A_0)) = C([\delta,T];\hhhh).  
\end{equation}

Moreover, we can directly prove that 
$\pppa (\eeaa) = -\la_n \eeaa$, and obtain
$$
\pppa S(t)a = \sumn \pppa (\eeaa) (a,\va_n)\va_n
= \sumn -\la_n \eeaa (a,\va_n)\va_n.
$$
Hence, by (2.6) we see that 
\begin{multline}\label{(2.8)}
\Vert \pppa S(t)a\Vert^2
= \sumn \la_n^2 \vert \eeaa \vert^2 \vert (a,\va_n)\vert^2\\
= t^{-2\alpha}\sumn (\la_n t^{\alpha})^2
\vert \eeaa\vert^2 \vert (a,\va_n)\vert^2\\
\le Ct^{-2\alpha}\sumn \vert (a,\va_n)\vert^2 \left(
\frac{\la_nt^{\alpha}}{1+\la_n t^{\alpha}}\right)^2
\le Ct^{-2\alpha}\Vert a\Vert^2               
\end{multline}
and
\begin{equation}\label{(2.9)}
\pppa S(t)a \in C((0,T];L^2(\OOO)).            
\end{equation}

By \eqref{(2.3)} with $\gamma=0$, we can easily verify that 
\begin{align*}
& \left\Vert \int^t_0 K(t-s)G(s) ds\right\Vert
\le C\int^t_0 (t-s)^{\alpha-1}\Vert G(s)\Vert ds\\
\le& C\Vert G\Vert_{L^{\infty}(0,T;L^2(\OOO))} \frac{t^{\alpha}}{\alpha}
\longrightarrow 0.
\end{align*}
Hence, with $S(t)a \in C([0,T];L^2(\OOO))$, we see that 
$\displaystyle\lim_{t\to 0} \Vert u(t) - a\Vert = 0$.

Moreover by Theorem 2.2 (i) in \cite{SY}, we see
$$
\pppa \left( \int^t_0 K(t-s)G(s) ds \right) \in L^2(\OOO\times (0,T)).
$$
This with \eqref{(2.8)}, we obtain $\pppa u \in L^1(0,T;L^2(\OOO))$.

Now we will prove 
$$
\int^t_0 K(t-s)G(s) ds \in C((0,T];\hhhh).
$$
For arbitrarily fixed $0<\delta_0<\delta$, we set 
$$
v_{\delta_0}(t) = \int^{t-\delta_0}_0 A_0K(t-s)G(s) ds,\quad
t \ge\delta.
$$
By \eqref{(2.3)} we can see that $v_{\delta_0} \in C([\delta,T];L^2(\OOO))$.
For $\delta \le t \le T$, by \eqref{(2.3)} we estimate
\begin{align*}
& \left\Vert \int^t_0 A_0K(t-s)G(s) ds - v_{\delta_0}(t)\right\Vert
= \left\Vert \int^t_{t-\delta_0} A_0K(t-s)G(s) ds \right\Vert\\
=& \left\Vert \int^t_{t-\delta_0} A_0^{1-\ep}K(t-s)A_0^{\ep}G(s) ds 
\right\Vert
\le C\int^t_{t-\delta_0} (t-s)^{\alpha\ep-1} \Vert A_0^{\ep}G(s)\Vert ds\\
\le &C\Vert A_0^{\ep}G\Vert_{L^{\infty}(0,T;L^2(\OOO))}
\frac{\delta_0^{\alpha\ep}}{\alpha\ep}.
\end{align*}
Hence 
$$
v_{\delta_0} \longrightarrow \int^t_0 A_0K(t-s)G(s) ds
\quad \mbox{in $C([\delta,T];L^2(\OOO))$}
$$
as $\delta_0 \to 0$, and by 
$v_{\delta_0} \in C([\delta,T];L^2(\OOO))$, we conclude that 
$$
\int^t_0 K(t-s)G(s) ds \in C([\delta,T];\hhhh)
$$
for any $\delta > 0$, and then 
$$
\int^t_0 K(t-s)G(s) ds \in C((0,T];\hhhh).
$$
Consequently by \eqref{(2.7)}, we obtain $u \in C((0,T]; \hhhh)$.

Finally, by \eqref{(2.3)} we have
\begin{align*}
& \left\Vert A_0\int^t_0 K(t-s)G(s) ds \right\Vert
= \left\Vert \int^t_0 A_0^{1-\ep}K(t-s)A_0^{\ep}G(s) ds \right\Vert\\
\le& C\int^t_0 (t-s)^{\alpha\ep-1} \Vert A_0^{\ep}G(s)\Vert ds
\le C\Vert A^{\ep}_0G\Vert_{L^{\infty}(0,T;L^2(\OOO))}
\frac{t^{\alpha\ep}}{\alpha\ep}.
\end{align*}
With \eqref{(2.2)}, the proof of the part (iii) is complete.
Thus the proof of Lemma \ref{Lemma 1} is complete.
\end{proof}

Henceforth we set 
$$
Bv(x) = \sum_{j=1}^d b_j(x)\ppp_jv(x) + c(x)v(x), \quad 
v\in \DDDD(B) = \hhhh.
$$

Next by Lemma \ref{Lemma 1}, we can prove

\begin{lem}\label{Lemma 2}
Let $F \in L^{\infty}(0,T;\DDDD(A_0^{\ep}))$ with some $\ep > 0$
and $a \in L^2(\OOO)$.  Then the solution $u$ to \eqref{(1.1)} 
belongs to $$C((0,T];\hhhh)$$ and there exists a constant $C>0$ depending on 
$T$, such that  
$$
\Vert u(T)\Vert_{H^2(\OOO)} 
\le C(t^{-\alpha}\Vert a\Vert 
+ \Vert A_0^{\ep}F\Vert_{L^{\infty}(0,T;L^2(\OOO))}), \quad t>0.
$$
\end{lem}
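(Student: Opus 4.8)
The plan is to treat the first–order part $B$ of $A=A_0+B$ as a perturbation and to reduce everything to Lemma~\ref{Lemma 1} plus a weakly singular Gronwall estimate. Recall $Bv(x)=\sum_{j=1}^d b_j(x)\ppp_jv(x)+c(x)v(x)$. First I would rewrite \eqref{(1.1)} as $\pppa u=-A_0u+(F-Bu)$ with the same boundary and initial conditions; since the problem with $A_0$ and source $F-Bu$ has at most one solution (cf.\ the references quoted after \eqref{(1.1)}), the solution $u$ of \eqref{(1.1)} must satisfy the Duhamel identity
$$
u(t)=S(t)a+\int_0^t K(t-s)\bigl(F(s)-Bu(s)\bigr)\,ds,\qquad 0<t\le T.
$$
(Alternatively, one may construct $u$ outright as the unique fixed point of the right–hand side in the weighted space $Z_T=\{v\in C((0,T];\hhhh):\ \sup_{0<t\le T}t^{\alpha}\Vert A_0v(t)\Vert<\infty\}$, which is forced by the admitted $t^{-\alpha}$ blow–up.) We may assume $0<\ep<1$, using $\Vert A_0^{\ep'}g\Vert\le C\Vert A_0^{\ep}g\Vert$ for $0<\ep'\le\ep$.

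The key step is a mapping property of $B$. For $v\in\DDDD(A_0)=\hhhh$ the $C^1$–regularity of the coefficients gives $Bv\in H^1(\OOO)$; note that in general $Bv\notin H^1_0(\OOO)=\DDDD(A_0^{1/2})$, since $\ppp_jv$ need not vanish on $\ppp\OOO$. However, it is classical (see, e.g., \cite{Pa}) that $\DDDD(A_0^{\theta})=H^{2\theta}(\OOO)$ as long as $2\theta<\tfrac12$, so that, fixing once and for all some $\theta\in(0,\tfrac14)$,
$$
Bv\in\DDDD(A_0^{\theta}),\qquad \Vert A_0^{\theta}Bv\Vert\le C\Vert v\Vert_{H^2(\OOO)}\le C\Vert A_0v\Vert .
$$
Hence in the Duhamel integral I would write $A_0K(t-s)Bu(s)=A_0^{1-\theta}K(t-s)\,A_0^{\theta}Bu(s)$ and apply \eqref{(2.3)} with exponent $1-\theta$, which produces the integrable kernel $(t-s)^{\alpha\theta-1}$:
$$
\Bigl\Vert A_0\!\int_0^t K(t-s)Bu(s)\,ds\Bigr\Vert\le C\int_0^t(t-s)^{\alpha\theta-1}\Vert A_0u(s)\Vert\,ds .
$$
Combining this with $\Vert A_0S(t)a\Vert\le Ct^{-\alpha}\Vert a\Vert$ from \eqref{(2.2)} and with $\Vert A_0\int_0^t K(t-s)F(s)\,ds\Vert\le C\Vert A_0^{\ep}F\Vert_{L^{\infty}(0,T;L^2(\OOO))}$ (argued exactly as in the proof of Lemma~\ref{Lemma 1}(iii)), the function $\psi(t):=\Vert A_0u(t)\Vert$ satisfies
$$
\psi(t)\le C t^{-\alpha}\Vert a\Vert+C\Vert A_0^{\ep}F\Vert_{L^{\infty}(0,T;L^2(\OOO))}+C\int_0^t(t-s)^{\alpha\theta-1}\psi(s)\,ds ,\qquad 0<t\le T.
$$

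Finally I would close the estimate by the generalized Gronwall inequality for weakly singular kernels. The forcing term $g(t)=Ct^{-\alpha}\Vert a\Vert+C\Vert A_0^{\ep}F\Vert_{L^{\infty}(0,T;L^2(\OOO))}$ is in $L^1(0,T)$ because $0<\alpha<1$, and convolving it with $(t-s)^{\alpha\theta-1}$ only \emph{improves} the singularity, since $\int_0^t(t-s)^{\alpha\theta-1}s^{-\alpha}\,ds=c_{\alpha,\theta}\,t^{\alpha\theta-\alpha}$ with $\alpha\theta-\alpha>-\alpha$; as the $k$–th iterated kernel is $O\bigl(t^{k\alpha\theta-1}/\Gamma(k\alpha\theta)\bigr)$, the Neumann series converges and yields $\psi(t)\le C(T)\bigl(t^{-\alpha}\Vert a\Vert+\Vert A_0^{\ep}F\Vert_{L^{\infty}(0,T;L^2(\OOO))}\bigr)$. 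Then $\Vert u(t)\Vert_{H^2(\OOO)}\le C\Vert A_0u(t)\Vert$ by elliptic regularity together with $0\notin\sigma(A_0)$ gives the claimed bound (at $t=T$, or at any $t>0$), while $u\in C((0,T];\hhhh)$ follows from the Duhamel representation by repeating on each $[\delta,T]$ the splitting argument of Lemma~\ref{Lemma 1}(ii), now applied to the integrable term $\int_0^t A_0^{1-\theta}K(t-s)A_0^{\theta}Bu(s)\,ds$ as well. I expect the main obstacle to be exactly the failure of $B$ to map into $\DDDD(A_0^{1/2})$; the resolution is to spend only one derivative on $K$, pick up the extra smoothing factor $(t-s)^{\alpha\theta}$ with $\theta<\tfrac14$, and absorb the resulting weakly singular Volterra term by a fractional Gronwall argument.
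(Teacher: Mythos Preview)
Your proposal is correct and follows essentially the same route as the paper: rewrite \eqref{(1.1)} via the Duhamel formula with $A_0$ as the principal part and $Bu$ treated as a source, exploit $\DDDD(A_0^{\theta})=H^{2\theta}(\OOO)$ for $\theta<\tfrac14$ to split $A_0K(t-s)Bu(s)=A_0^{1-\theta}K(t-s)A_0^{\theta}Bu(s)$, and close with the weakly singular Gronwall inequality. The only cosmetic difference is that the paper assumes without loss of generality $0<\ep<\tfrac14$ and uses the same exponent $\ep$ for both the $F$ and the $Bu$ terms, whereas you keep $\ep$ for $F$ and introduce a separate $\theta\in(0,\tfrac14)$ for $Bu$; this changes nothing of substance.
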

\begin{proof}{(of Lemma \ref{Lemma 2}).}
Without loss of generality, we can assume that $0 < \ep < \frac{1}{4}$.
By Lemma \ref{Lemma 1}, we have 
\begin{equation}\label{(2.10)}
u(t) = S(t)a + \int^t_0 K(t-s)F(s) ds 
+ \int^t_0 K(t-s)Bu(s) ds.       
\end{equation}
By Gorenflo, Luchko and Yamamoto \cite{GLY} or Kubica, Ryszewska and
Yamamoto \cite{KRY}, we know that there exists a unique solution
$u \in C([0,T]; L^2(\OOO))$ to \eqref{(2.10)}.  
Applying $A_0$ to equation (2.10), we have
$$
A_0u(t)\!\! =\!\! A_0S(t)a
+ \!\!\int^t_0 \!\!\!\!A_0^{1-\ep}K(t-s)A_0^{\ep}F(s) ds 
+\!\! \int^t_0 \!\!\!\!A_0^{1-\ep}K(t-s)A_0^{\ep}Bu(s) ds.
$$
Then, applying Lemma \ref{Lemma 1} (i), we obtain
\begin{align*}
& \Vert u(t)\Vert_{H^2(\OOO)}
\le Ct^{-\alpha}\Vert a\Vert 
+ C\int^t_0 (t-s)^{\alpha\ep-1}ds \Vert A_0^{\ep}F\Vert
_{L^{\infty}(0,T;L^2(\OOO))}\\
+& C\int^t_0 (t-s)^{\alpha\ep-1}\Vert u(s)\Vert_{H^2(\OOO)} ds\\
\le &C(t^{-\alpha}\Vert a\Vert + \Vert A_0^{\ep}F\Vert
_{L^{\infty}(0,T;L^2(\OOO))})
+ C\int^t_0 (t-s)^{\alpha\ep-1}\Vert u(s)\Vert_{H^2(\OOO)} ds.
\end{align*}
Here we used the following: by $0<\ep< \frac{1}{4}$ we have
$\Vert A_0^{\ep}v\Vert \sim \Vert v\Vert_{H^{2\ep}(\OOO)}$
for $v\in \DDDD(A_0^{\ep}) = H^{2\ep}(\OOO)$ (e.g., Fujiwara \cite{F}), 
and so
$$
\Vert A_0^{\ep}Bu(s)\Vert 
\le C\Vert Bu(s)\Vert_{H^{2\ep}(\OOO)} \le C\Vert u(s)\Vert_{H^2(\OOO)}
$$
because $Bu(s) \in H^1(\OOO) \subset \DDDD(A_0^{\ep})$ by 
$u(s) \in \hhhh$.
The generalized Gronwall inequality (e.g., Henry \cite{H} or
Lemma A.2 in \cite{KRY}) yields
\begin{align*}
& \Vert u(t)\Vert_{H^2(\OOO)}
\le C(t^{-\alpha}\Vert a\Vert + \Vert A_0^{\ep}F\Vert
_{L^{\infty}(0,T;L^2(\OOO))})\\
+ & Ce^{Ct}\int^t_0 (t-s)^{\alpha\ep-1}
(s^{-\alpha}\Vert a\Vert + \Vert A_0^{\ep}F\Vert_{L^{\infty}(0,T;L^2(\OOO))})
ds\\
\le& C(t^{-\alpha}\Vert a\Vert 
+ \Vert A_0^{\ep}F\Vert_{L^{\infty}(0,T;L^2(\OOO))})\\
+& Ce^{Ct}\left( t^{\alpha\ep-\alpha}
\frac{\Gamma(\alpha\ep)\Gamma(1-\alpha)}
{\Gamma(1-\alpha+\alpha\ep)}\Vert a\Vert
+ \frac{t^{\alpha\ep}}{\alpha\ep} \Vert A_0^{\ep}F\Vert
_{L^{\infty}(0,T;L^2(\OOO))}\right).
\end{align*}
Consequently 
$$
\Vert u(t)\Vert_{H^2(\OOO)}
\le C(t^{-\alpha}\Vert a\Vert 
+ \Vert A_0^{\ep}F\Vert_{L^{\infty}(0,T;L^2(\OOO))}).
$$
Thus the proof of Lemma \ref{Lemma 2} is complete.
\end{proof}

Finally we know

\begin{lem}\label{Lemma 3}
For $T>0$, the operator $$S(T): L^2(\OOO) \longrightarrow \hhhh$$ is surjective 
and there exist constants $C_1, C_2 > 0$ such that 
$$
C_1\Vert S(T)a\Vert_{H^2(\OOO)} \le \Vert a\Vert 
\le C_2\Vert S(T)a\Vert_{H^2(\OOO)}.
$$
\end{lem}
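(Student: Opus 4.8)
The plan is to diagonalize $S(T)$ with respect to the orthonormal basis $\{\va_n\}$, to reduce the $H^2(\OOO)$-norm on the range of $S(T)$ to the graph norm of $A_0$, and then to read off the two-sided estimate from uniform upper and lower bounds for the scalars $\la_nE_{\alpha,1}(-\la_nT^{\alpha})$; surjectivity will then follow by exhibiting the preimage explicitly.

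First I would record, exactly as in the proof of Lemma \ref{Lemma 1}, that standard $L^2$ elliptic regularity for $A_0$ (together with $\la_1>0$, so that $0\notin\sigma(A_0)$) gives constants $c,C>0$ with $c\Vert v\Vert_{H^2(\OOO)}\le\Vert A_0v\Vert\le C\Vert v\Vert_{H^2(\OOO)}$ for all $v\in\DDDD(A_0)=\hhhh$. Since \eqref{(2.2)} yields $S(T)a\in\DDDD(A_0)$ for every $a\in L^2(\OOO)$, it then suffices to prove $\Vert a\Vert\sim\Vert A_0S(T)a\Vert$. Because $A_0S(T)a=\sumn\la_nE_{\alpha,1}(-\la_nT^{\alpha})(a,\va_n)\va_n$ and $\{\va_n\}$ is orthonormal, Parseval's identity gives
$$\Vert A_0S(T)a\Vert^2=\sumn\mu_n^2\,\vert(a,\va_n)\vert^2,\qquad \mu_n:=\la_nE_{\alpha,1}(-\la_nT^{\alpha}),$$
so everything reduces to showing $0<m\le\mu_n\le M<\infty$ for all $n$, with $m,M$ depending only on $\alpha,T,\la_1$.

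The upper bound is immediate from \eqref{(2.6)}: $\mu_n\le C\la_n/(1+\la_nT^{\alpha})\le CT^{-\alpha}$. For the lower bound I would invoke the classical properties of the Mittag-Leffler function (see e.g. \cite{Po}): $E_{\alpha,1}(-\eta)>0$ for all $\eta\ge0$ when $0<\alpha<1$, $E_{\alpha,1}$ is continuous with $E_{\alpha,1}(0)=1$, and $E_{\alpha,1}(-\eta)=\frac{1}{\Gamma(1-\alpha)\eta}+O(\eta^{-2})$ as $\eta\to+\infty$; together these yield a constant $c_0>0$ with $E_{\alpha,1}(-\eta)\ge\frac{c_0}{1+\eta}$ for all $\eta\ge0$. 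Since $x\mapsto x/(1+xT^{\alpha})$ is increasing, $\mu_n\ge\frac{c_0\la_n}{1+\la_nT^{\alpha}}\ge\frac{c_0\la_1}{1+\la_1T^{\alpha}}=:m>0$. Combining $m\Vert a\Vert\le\Vert A_0S(T)a\Vert\le M\Vert a\Vert$ with the elliptic-regularity equivalence gives the two stated inequalities, with $C_1=c/M$ and $C_2=C/m$.

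For surjectivity: given $b\in\hhhh=\DDDD(A_0)$, write $b=\sumn b_n\va_n$ with $b_n=(b,\va_n)$ and $\sumn\la_n^2b_n^2<\infty$. From $E_{\alpha,1}(-\la_nT^{\alpha})\ge\frac{c_0}{1+\la_nT^{\alpha}}$ and $\la_n\ge\la_1$ one gets $E_{\alpha,1}(-\la_nT^{\alpha})\ge c_1/\la_n$ with $c_1=\frac{c_0\la_1}{1+\la_1T^{\alpha}}>0$, so the series $a:=\sumn\frac{b_n}{E_{\alpha,1}(-\la_nT^{\alpha})}\va_n$ satisfies $\Vert a\Vert^2=\sumn\frac{b_n^2}{\vert E_{\alpha,1}(-\la_nT^{\alpha})\vert^2}\le c_1^{-2}\sumn\la_n^2b_n^2<\infty$, whence $a\in L^2(\OOO)$ and $S(T)a=\sumn b_n\va_n=b$. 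The only ingredient that is not already available in the excerpt is the pointwise lower bound $E_{\alpha,1}(-\eta)\ge c_0/(1+\eta)$, and I expect that to be the one genuinely nontrivial point: it rests on the positivity and the sharp decay rate of the Mittag-Leffler function, whereas the remainder of the argument is a routine diagonalization.
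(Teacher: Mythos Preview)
Your proposal is correct and follows essentially the approach the paper indicates: the paper does not give a self-contained proof but simply refers to Theorem~4.1 in \cite{SY}, noting that it rests on the eigenfunction expansion of $S(T)$ and the complete monotonicity of $E_{\alpha,1}(-\eta)$ (Pollard \cite{Pol}, Gorenflo--Mainardi \cite{GM}), which is exactly your diagonalization together with the positivity and sharp decay of the Mittag--Leffler function. Your argument is in fact more explicit than what the paper itself supplies.
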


Lemma \ref{Lemma 3} is proved as Theorem 4.1 in \cite{SY}, whose proof is based on 
the representation of $S(T)a$ by the eigenfunction expansion and the 
complete monotonicity of $\eeaa$ (e.g., Gorenflo and Mainardi \cite{GM},
Pollard \cite{Pol}).
\section{Proofs of Theorems \ref{Theorem 1} and \ref{Theorem 2}}\label{sec3}

\subsection{Proof of Theorem \ref{Theorem 1}}
In terms of the lower-order part $B$ of the elliptic operator $-A$,
we can rewrite \eqref{(1.1)} as
\begin{equation}\label{(3.1)}
\left\{ \begin{array}{rl}
& \pppa u(t) = -A_0u(t) + Bu(t), \quad t>0, \\
& u(0) = a, \\
& u(t) \in H^1_0(\OOO), \quad 0 < t < T.
\end{array}\right.
\end{equation}
By Lemma \ref{Lemma 1} (ii), we have
\begin{equation}\label{(3.2)}
b:= u_a(T) = S(T)a + \int^T_0 K(T-s)Bu_a(s) ds.     
\end{equation}
Here, by $u_a(t)$, we denote the solution to \eqref{(3.1)}.  Applying 
Lemma \ref{Lemma 3} to 
\eqref{(3.2)}, we obtain
\begin{equation}\label{(3.3)}
a = S(T)^{-1}b - S(T)^{-1}\int^T_0 K(T-s)Bu_a(s) ds
=: S(T)^{-1}b - La,                
\end{equation}
where 
\begin{equation}\label{(3.4)}
La = S(T)^{-1}\int^T_0 K(T-s)Bu_a(s) ds.                 
\end{equation}
{\bf First Step.}
We prove that 
$L:L^2(\OOO) \longrightarrow L^2(\OOO)$ is a compact operator.
We set 
$$
L_0a = \int^T_0 K(T-s)Bu_a(s) ds, \quad a \in L^2(\OOO).
$$
Then $La = S(T)^{-1}L_0a$.

We choose $0 < \delta_0 < \delta_1 < \frac{1}{4}$.  We will estimate
$\Vert A_0^{1+\delta_0}L_0a\Vert$.
We note that $A_0^{\gamma}K(t)a = K(t)A_0^{\gamma}a$ for 
$\gamma\ge 0$ and $a \in \DDDD(A_0^{\gamma})$, which can be 
directly verified.  By \eqref{(2.3)}, we have
\begin{align*}
\Vert A_0^{1+\delta_0}L_0a\Vert
&= \left\Vert\int^T_0 A_0^{1+\delta_0}K(T-s)Bu_a(s) ds\right\Vert\\
&= \left\Vert \int^T_0 A_0^{1+\delta_0-\delta_1}K(T-s)A_0^{\delta_1}
B(u_a(s)) ds \right\Vert\\
&\le  C\int^T_0 (T-s)^{\alpha(\delta_1-\delta_0)-1}
\Vert Bu_a(s)\Vert_{H^1(\OOO)}ds\\
&\le C\int^T_0 (T-s)^{\alpha(\delta_1 - \delta_0)-1}
s^{-\alpha}\Vert a\Vert ds.
\end{align*}
For the last inequality, we used 
$0 < \delta_0 < \delta_1 < \frac{1}{4}$,
and $b_j, c \in C^1(\ooo{\OOO})$ and Lemma \ref{Lemma 2}, and  
$\DDDD(A_0^{\delta_1}) = H^{2\delta_1}(\OOO)$ (e.g., \cite{F}) and
\begin{align*}
& \Vert A_0^{\delta_1}Bu_a(s)\Vert 
\le C\Vert Bu_a(s)\Vert_{H^{2\delta_1}(\OOO)} \\
\le& C\Vert u_a(s)\Vert_{H^{1+2\delta_1}(\OOO)}
\le C\Vert A_0u_a(s)\Vert \le Cs^{-\alpha}\Vert a\Vert.
\end{align*} 
Therefore 
\begin{align*}
& \Vert A_0^{1+\delta_0}L_0a\Vert
\le C\Vert a\Vert\int^T_0 (T-s)^{\alpha(\delta_1-\delta_0)-1}
s^{-\alpha}ds\\
=& CT^{\alpha(\delta_1-\delta_0-1)}
\frac{\Gamma(\alpha(\delta_1-\delta_0))\Gamma(1-\alpha)}
{\Gamma(1 - \alpha + \alpha(\delta_1-\delta_0))}\Vert a\Vert
\end{align*}
because $\delta_1 - \delta_0 > 0$.

Since $\DDDD(A_0^{1+\delta_0}) \subset H^{2+2\delta_0}(\OOO)$ and the 
embedding\\ $H^{2+2\delta_0}(\OOO) \longrightarrow H^2(\OOO)$ is compact,
the operator $L_0: L^2(\OOO) \longrightarrow H^2(\OOO)$ is compact.
Moreover $S(T)^{-1}: H^2(\OOO) \longrightarrow L^2(\OOO)$ is bounded by 
Lemma \ref{Lemma 3}, we see that $L=S(T)^{-1}L_0: L^2(\OOO) \longrightarrow L^2(\OOO)$ 
is a compact operator. \\
{\bf Second Step.}
Since $b \in \hhhh$, by Lemma \ref{Lemma 3} we have
$p:= S(T)^{-1}b \in L^2(\OOO)$ and we rewrite \eqref{(3.3)} as
\begin{equation}\label{(3.5)}
(1+L)a = p \quad \mbox{in $L^2(\OOO)$}.    
\end{equation}
In the First Step, we already prove that $L:L^2(\OOO) \longrightarrow
L^2(\OOO)$ is compact.  Hence if we will prove that 
\begin{equation}\label{(3.6)}
La = -a \quad \mbox{implies} \quad a=0,              
\end{equation}
then the Fredholm alternative yields that $ (1+L)^{-1}: L^2(\OOO) 
\longrightarrow L^2(\OOO)$ is a bounded operator, and the proof can be
finished.

Equation \eqref{(3.6)} implies
$$
S(T)a + \int^T_0 K(T-s)Bu_a(s) ds = 0 \quad \mbox{in $L^2(\OOO)$}.
$$
Then we have to prove $a=0$.
For it, by means of Lemma \ref{Lemma 1} (ii), it is sufficient to prove
that if $w$ satisfies
$$\left\{ \begin{array}{rl}
& \pppa w(t) = -Aw(t),    \\
& w(t) \in H^1_0(\OOO), \quad 0 < t < T       
\end{array}\right.
$$
and
$w(T) = 0$ in $L^2(\OOO)$, then $w(0) = 0$.

We recall that the operator $A$ is defined by \eqref{(1.2)} with 
$\DDDD(A) = \hhhh$.  Then it is known that the spectrum 
$\sigma(A)$ of $A$ consists entirely of eigenvalues with 
finite multiplicities.  We denote $\sigma(A)$ by 
$\{\mu_1, \mu_2, ...\}$.   Here $\sigma(A)$ is a set and so 
$\mu_i$ and $\mu_j$, $i\ne j$ are mutually distinct.
Let $P_n$ be the projection for $\mu_n$, $n \in \N$ which is defined by
$$
P_n = \frac{1}{2\pi\sqrt{-1}} \int_{\gamma(\mu_n)}
(z-A)^{-1} dz,
$$
where $\gamma(\mu_n)$ is a circle centered at $\mu_n$ with sufficiently 
small radius such that the disc bounded by $\gamma(\mu_n)$ does not
contain any points in $\sigma(A)\setminus \{\mu_n\}$,
Then $P_n:L^2(\OOO) \longrightarrow L^2(\OOO)$ is a bouned linear operator
and $P_n^2 = P_n$ for $n\in \N$ (e.g., Kato \cite{Ka}).
Setting $m_n:= \mbox{dim}\, P_nL^2(\OOO)$, we have 
$m_n<\infty$.

The following is a fundamental fact.

\begin{lem}\label{Lemma 4}
If $y \in L^2(\OOO)$ satisfies $P_ny = 0$ for all $n \in \N$, 
then $y=0$.
\end{lem}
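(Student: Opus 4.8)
The plan is to establish the contrapositive: assuming $P_n y = 0$ for every $n \in \N$, I will show that the $L^2(\OOO)$-valued map $g(z) := (z - A)^{-1}y$, holomorphic on the resolvent set $\rho(A) = \C \setminus \{\mu_n\}$, extends to an \emph{entire} function, and then that resolvent decay estimates together with the Phragm\'en--Lindel\"of principle and Liouville's theorem force $g \equiv 0$; since $y = (z_0 - A)\,g(z_0)$ for any $z_0 \in \rho(A)$, this gives $y = 0$. This is, in essence, the classical argument for completeness of root vectors of an elliptic operator (in the spirit of Keldysh's theorem), and one may instead simply invoke such a completeness theorem — from Agmon or Gohberg--Krein, say — for $A$, or equivalently for its formal adjoint $A^*$ (also a uniformly elliptic second-order operator with the zero Dirichlet condition): indeed $P_n y = 0$ for all $n$ means $y \perp \bigcup_n P_n^* L^2(\OOO)$, so the lemma is precisely the statement that the generalized eigenfunctions of $A^*$ are complete in $L^2(\OOO)$.

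First I would record the spectral structure of $A$ with $\DDDD(A) = \hhhh$. Since $H^2(\OOO) \hookrightarrow L^2(\OOO)$ is compact, $A$ has compact resolvent, hence $\sigma(A) = \{\mu_n\}$ consists of isolated eigenvalues of finite algebraic multiplicity with $|\mu_n| \to \infty$. By G\r{a}rding's inequality (using $a_{ij} = a_{ji}$, the ellipticity constant $\kappa$, and $b_j, c \in C^1(\ooo{\OOO})$), the numerical range $W(A)$ is contained in a parabolic region $\{\xi + i\eta : \xi \ge -C,\ |\eta| \le C(1+\xi)^{1/2}\}$; combined with the fact that far-left real points lie in $\rho(A)$ and that the complement of this region is connected, it follows that $\C \setminus \ooo{W(A)} \subset \rho(A)$ with $\|(z - A)^{-1}\| \le 1/\mathrm{dist}(z, \ooo{W(A)})$ there. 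In particular, along any ray that is not asymptotically tangent to the positive real direction, $\|(z - A)^{-1}\| = O(|z|^{-1})$ as $|z| \to \infty$.

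The crux is a local computation: around each $\mu_n$ the Laurent expansion of $(z - A)^{-1}$ has principal part a finite sum of terms of the form $(z - \mu_n)^{-k}(A - \mu_n)^{k-1}P_n$ (Kato, \emph{Perturbation Theory for Linear Operators}, Ch.~III). Since $P_n$ commutes with $A$ and $P_n y = 0$, each such term kills $y$, so $g(z) = (z - A)^{-1}y$ has removable singularities at all the $\mu_n$ and extends to an entire function $g : \C \to L^2(\OOO)$. For the growth of $g$, I would use that $T := (z_0 - A)^{-1}$ (with $z_0$ very negative) is compact with singular values $s_k(T) = O(k^{-2/d})$ — from Weyl's asymptotics $\la_k(A_0) \asymp k^{2/d}$ and the fact that $B$ is a lower-order, hence relatively compact (and, for $z_0$ far to the left, relatively small) perturbation of $A_0$ — so $T$ belongs to the Schatten class $\mathcal{S}_p$ for every $p > d/2$. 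Writing $(z - A)^{-1} = -(I - (z - z_0)T)^{-1}T$ and invoking the Carleman-type bound $\|(I - \zeta T)^{-1}\| \le C\exp(c|\zeta|^p)$ off small discs about the poles (together with the maximum principle on those discs, where $g$ is holomorphic), one obtains $\|g(z)\| \le C(1 + |z|)\exp(c|z|^p)$ for all $z \in \C$; that is, $g$ is entire of finite order at most $p$.

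Finally I would cover $\C$ by finitely many closed sectors with common vertex $z_0$, each of opening strictly less than $\pi/p$, whose bounding rays are not tangent to the positive real direction and avoid the countable set $\sigma(A)$. On each such bounding ray $g$ is bounded (and tends to $0$), by the resolvent estimate above away from $\ooo{W(A)}$ together with continuity of the entire function $g$ on the bounded part of the ray; and inside each sector $g$ is holomorphic of order at most $p$ with the opening smaller than $\pi/p$, so the Phragm\'en--Lindel\"of principle bounds $g$ there by its supremum over the two bounding rays. Hence $g$ is bounded on all of $\C$, so it is constant by Liouville, and since it decays along, say, the negative real axis, $g \equiv 0$; therefore $y = (z_0 - A)\,g(z_0) = 0$. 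The main obstacle is exactly this last step: it requires both a \emph{global} finite-order bound on $g$ (supplied by the Schatten-class membership of the resolvent) and the confinement of $\sigma(A)$, and of $W(A)$, to a region thin enough that the Phragm\'en--Lindel\"of mechanism applies across all of $\C$; granting those two inputs, the rest is routine manipulation of the spectral projections $P_n$.
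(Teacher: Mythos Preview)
Your argument is correct, and in fact you explicitly mention the shortcut that the paper itself takes. The paper's proof is a three-line appeal to completeness of the generalized eigenfunctions of the adjoint: it observes that $P_n^*$ is the spectral projection of $A^*$ at $\overline{\mu_n}$, invokes Theorem~16.5 in Agmon to get $\overline{\operatorname{Span}_{n}\,P_n^*L^2(\Omega)} = L^2(\Omega)$, and then reads $0=(P_ny,\psi)=(y,P_n^*\psi)$ for all $n$ and $\psi$ as $y\perp$ a dense set, hence $y=0$.

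What you do differently is to unpack the content of that citation: you run the Keldysh--Agmon machinery directly on $A$, showing that $g(z)=(z-A)^{-1}y$ extends entire (via the Laurent principal parts $(z-\mu_n)^{-k}(A-\mu_n)^{k-1}P_n$ all annihilating $y$), has finite order by the Schatten-$p$ membership of the resolvent for $p>d/2$, and is bounded on enough rays (thanks to the parabolic confinement of the numerical range) for Phragm\'en--Lindel\"of and Liouville to force $g\equiv 0$. This buys self-containment and makes transparent which features of $A$ are actually used --- compact resolvent with Weyl-type singular-value decay and a thin numerical range --- whereas the paper's route is much shorter but treats Agmon's theorem as a black box. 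One small quibble: your opening sentence calls this ``the contrapositive,'' but you are in fact proving the direct implication.
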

\begin{proof}
First we note 
$$
-(A^*v)(x) = \sumij \ppp_i(a_{ij}\ppp_jv)
- \sum_{j=1}^d \ppp_j(b_jv) + c(x)v, \quad 
\DDDD(A^*) = \hhhh,
$$
where $A^*$ is the adjoint operator of $A$.
Let $P_n^*$ be the adjoint operator of $P_n$:
$(P_n\va, \psi) = (\va, P_n^*\psi)$ for each $\va, \psi \in L^2(\OOO)$.

Then it is known (e.g., \cite{Ka}) that $\sigma(A^*) 
= \{ \ooo{\mu_n}\}_{n\in \N}$, where $\ooo{\mu}$ denotes the 
complex conjugate of $\mu \in \C$ and $P_n^*$ is the projection for 
the eigenvalue $\ooo{\mu_n}$ of $A^*$, and 
dim $P_n^*L^2(\OOO) = \mbox{dim}\, P_nL^2(\OOO) = m_n$.
Then by Theorem 16.5 in Agmon \cite{Ag}, we have
$$
\ooo{\mbox{Span}_{n\in \N}\, P_n^*L^2(\OOO)} = L^2(\OOO),
$$
that is, 
\begin{equation}\label{(3.7)}
(y, P_n^*\psi) = 0, \quad n\in \N, \, \psi\in L^2(\OOO) \quad 
\mbox{imply $y=0$}.                   
\end{equation}
Now we can complete the proof of Lemma 3.1.  Let $P_ny = 0$ for 
$n \in \N$.  Then $(P_ny, \psi) = 0$ for all $\psi \in L^2(\OOO)$.
Therefore $0 = (P_ny, \psi) = (y, P_n^*\psi)$ 
for all $n \in \N$ and $\psi \in 
L^2(\OOO)$, which yields $y=0$ by \eqref{(3.7)}.
\end{proof}
\noindent{\bf Third Step: completion of the proof of Theorem \ref{Theorem 1}.} Let we note $\pppa(P_nu(t)) = P_n\pppa u(t)$ because 
$P_n: L^2(\OOO) \longrightarrow L^2(\OOO)$ is a bounded operator.
We set $u_n(t) = P_nu(t)$.  Then 
$$
P_nAu_n(t) = Au_n(t) = -\mu_nu_n(t) + D_nu_n(t),
$$
where $D_n$ is an operator satisfying $D_n^{m_n} = O$, which 
corresponds to the Jordan canonical form.  Then \eqref{(3.1)} yields
$$\left\{ \begin{array}{rl}
& \pppa u_n(t) = (-\mu_n + D_n)u_n(t), \\
& u_n(0) = P_na, \qquad n \in \N.
\end{array}\right.
$$
We can define an operator $E_{\alpha,1}((-\mu_n + D_n)t^{\alpha})$ by the 
power series:
$$
E_{\alpha,1}((-\mu_n + D_n)t^{\alpha}) = \sum_{k=0}^{\infty}
\frac{(-\mu_n+D_n)^k t^{\alpha k}}{\Gamma(\alpha k + 1)}, \quad t>0.
$$
Then we can directly verify 
\begin{equation}\label{(3.8)}
u_n(t) = E_{\alpha,1}((-\mu_n + D_n)t^{\alpha})P_na, \quad t>0.
\end{equation}

Now we calculate the right-hand side of \eqref{(3.8)}.
Correspondingly to the Jordan canonical form, we can choose a suitable 
basis of $P_nL^2(\OOO)$:
$$
\psi_j^k: k=1,..., \ell_n, \quad j=1, ..., d_k
$$
satisfying $\sum_{k=1}^{\ell_n} d_k = m_n$, and
$$
\left\{ \begin{array}{rl}
& (A-\mu_n)\psi_1^k = 0, \\
& (A-\mu_n)\psi_2^k = \psi_1^k, \\
&\cdots \cdots \cdots, \\
& (A-\mu_n)\psi_{d_k}^k = \psi_{d_k-1}^k, \quad 1\le k \le \ell_n.
\end{array}\right.
$$
We expand $P_na$ in terms of this basis in $P_nL^2(\OOO)$:
$$
P_na = \sum_{k=1}^{\ell_n}\sum_{j=1}^{d_k} a^k_j \psi_j^k.
$$
Then 
\begin{align*}
& E_{\alpha,1}((-\mu_n + D_n)t^{\alpha})
(\psi_1^k \, \psi_2^k \, \cdots \, \psi_{d_k}^k)
\left(
\begin{array}{cc}
a_1^k \\
\vdots \\
a_{d_k}^k \\
\end{array}\right)\\
= &\sum_{m=0}^{\infty} t^{\alpha m}\frac{(-\mu_n+D_n)^m}{\Gamma(\alpha m + 1)}
(\psi_1^k \, \psi_2^k \, \cdots \, \psi_{d_k}^k)
\left(
\begin{array}{cc}
a_1^k \\
\vdots \\
a_{d_k}^k \\
\end{array}\right)\\
=& (\psi_1^k \, \psi_2^k \, \cdots \, \psi_{d_k}^k)\\
&\times\sum_{m=0}^{\infty} \frac{t^{\alpha m}}{\Gamma(\alpha m + 1)}
\left(
\begin{array}{ccccc}
-\mu_n^m & *        & \cdots & * & *\\
0        & -\mu_n^m & \cdots & * & *\\
\cdots   & \cdots   & \cdots & \cdots & \cdots\\
0        &  0       & \cdots & -\mu_n^m & *\\
0        & 0        & \cdots & 0        & -\mu_n^m \\ 
\end{array}\right)
\left(
\begin{array}{cc}
a_1^k \\
\vdots \\
a_{d_k}^k \\
\end{array}\right).
\end{align*}
Since $u_n(T) = 0$, we see that each component of the above is equal to
0 at $t=T$, and so 
\begin{equation}\label{(3.9)}
\left\{\begin{array}{rl}
& E_{\alpha,1}(-\mu_nT^{\alpha})a_1^k + \sum_{p=2}^{d_k} \theta_{1p}a_p^k
= 0, \\
& E_{\alpha,1}(-\mu_nT^{\alpha})a_2^k + \sum_{p=3}^{d_k} \theta_{2p}a_p^k
= 0, \\
& \cdots \cdots \cdots \\
& E_{\alpha,1}(-\mu_nT^{\alpha})a_{d_k-1}^k + \theta_{d_k-1, d_k}a_{d_k}^k
= 0,\\
& E_{\alpha,1}(-\mu_nT^{\alpha})a_{d_k}^k = 0,
\end{array}\right.
\end{equation}
where $\theta_{jp}$ with $j+1\le p \le d_k$ and 
$j=1,..., d_k-1$, are some constants depending also on $T$.
By the complete monotonicity (e.g., Gorenflo and Mainardi \cite{GM},
and Pollard \cite{Pol}), we see that 
$E_{\alpha,1}(-\mu_nT^{\alpha})\ne 0$.  Therefore by the backward 
substitution in (3.9), we can sequentially obtain 
$a_{d_k}^k=0$, $a_{d_k-1}^k=0$, ...., $a_1^k=0$ for $k=1,..., \ell_n$.
Hence $P_na=0$ for each $n \in \N$.  Then we reach $a=0$ in $L^2(\OOO)$.
Thus the proof of Theorem \ref{Theorem 1} is complete.
\qed

\subsection{Proof of Theorem 1.3}

Let $w = w(t)$ be the solution to 
$$\left\{ \begin{array}{rl}
& \pppa w(t) = -Aw(t) + F, \quad t>0,\\
& w(0) = 0, \quad w(t) \in H^1_0(\OOO), \quad t>0.
\end{array}\right.
$$
Since $F\in L^{\infty}(0,T;\DDDD(A_0^{\ep}))$, Lemma \ref{Lemma 2} proves
that $w \in C((0,T];\hhhh)$.  We consider
\begin{equation}\label{(3.10)}
\left\{ \begin{array}{rl}
& \pppa v(t) = -Av(t), \quad t>0,\\
& v(T) = b - w(T), \quad v(t) \in H^1_0(\OOO), \quad t>0.
\end{array}\right.                 
\end{equation}
By Theorem \ref{Theorem 1}, for $b \in \hhhh$, there exists a unique solution 
$v \in C([0,T];L^2(\OOO)) \cap C((0,T];\hhhh)$ such that 
$\pppa v \in C((0,T];L^2(\OOO))$ to \eqref{(3.10)}.
Setting $u=v+w$, we see that $u(T) = b - w(T) + w(T) = b$.
Then we can verify that $u$ satisfies
$$\left\{ \begin{array}{rl}
& \pppa u(t) = -Au(t) + F(t), \quad t>0,\\
& u(T) = b, \quad u(t) \in H^1_0(\OOO), \quad t>0.
\end{array}\right.
$$
The uniqueness of $u$ is seen by Theorem \ref{Theorem 1}.
Thus the proof of Theorem \ref{Theorem 2} is complete.
\begin{flushright}
$\square$
\end{flushright}

In future projects we would investigate similar pro\-blems where the 
principal part is an elliptic operator of order greater than $2$, 
like in \cite{FR}, and in the case of applied systems like \cite{CFGY}. 
Moreover we would study related inverse problems similarly to \cite{CFGY},
\cite{CFY1} and \cite{KCPF}.

\section*{Acknowledgment}
{\smaller\smaller\smaller The authors are indebted to the anonymous referee for the criticism and the useful suggestions which have made this paper easier 
to read and understand.\\
This work is also supported by the Istituto Nazionale di Alta Matematica (IN$\delta$AM),
through the GNAMPA Research Project 2019. 
Moreover, this research was performed in the framework of the 
French-German-Italian Laboratoire International Associ\'e (LIA), named COPDESC, on Applied Analysis,  issued by CNRS, MPI and IN$\delta$AM.}


\end{document}